\newcommand   {\greeka}      {\alpha                                           }
\newcommand   {\greekb}      {\beta                                            }
\newcommand   {\greekl}      {\lambda                                          }
\newcommand   {\greekp}      {\pi                                              }
\newcommand   {\greekph}     {\varphi                                          }
\newcommand   {\greekO}      {\Omega                                           }
\newcommand   {\scripta}     {{\scalebox{1.10}{$\mathfrak{a}$}}                }
\newcommand   {\scriptf}     {{\scalebox{1.10}{$\mathfrak{f}$}}                }
\newcommand   {\scripth}     {{\scalebox{1.10}{$\mathfrak{h}$}}                }
\newcommand   {\scriptm}     {{\scalebox{1.10}{$\mathfrak{m}$}}                }
\newcommand   {\scriptu}     {{\scalebox{1.10}{$\mathfrak{u}$}}                }
\newcommand   {\setC}        {\mathds{C}                                       }
\newcommand   {\setI}        {\mathds{I}                                       }
\newcommand   {\setN}        {\mathds{N}                                       }
\newcommand   {\setV}        {\mathds{V}                                       }
\newcommand   {\setZ}        {\mathds{Z}                                       }
\newcommand   {\Satisfies}   {\mbox{ satisfies }                               }
\newcommand   {\A}           {\forall \,                                       }
\newcommand   {\E}           {\exists \,                                       }
\newcommand   {\noneq}       {\not =                                           }
\newcommand   {\nonin}       {\not \in                                         }
\newcommand   {\defiff}      {\, :\Longleftrightarrow \,                       }
\newcommand   {\timplies}    {\Rightarrow                                      }
\newcommand   {\tiff}        {\Leftrightarrow                                  }
\newcommand   {\set}[1]      {\left\{\, #1 \,\right\}                          }
\newcommand   {\setin}       {\subseteq \,                                     }
\newcommand   {\setinn}      {\subset                                          }
\newcommand   {\into}        {\, \hookrightarrow \,                            }
\newcommand   {\onto}        {\, \twoheadrightarrow \,                         }
\newcommand   {\onetoone}    {\, \longleftrightarrow \,                        }
\newcommand   {\isoto}       {\, \stackrel{\raisebox{-0.5ex
                                 }[0pt]{$\sim$}}{\longrightarrow} \,           }
\newcommand   {\abs}[1]      {| #1 |                                           }
\newcommand   {\gen}[1]      {\langle #1 \rangle                               }
\newcommand   {\Max}[1]      {\mbox{\rm{max}} \hspace{-0.06cm}
                              \left\{ \, #1 \, \right\}                        }
\newcommand   {\kn}          {\mbox{\rm{ker}}                                  }
\newcommand   {\ideal}       {\unlhd                                           }
\newcommand   {\Modulo}[2]   {\raisebox{3pt}{$#1$} \hspace{-1pt}
                              \raisebox{-1pt}{\scalebox{1.5}{/}}\hspace{1dd}
															\hspace{-2pt} \raisebox{-3pt}{$#2$}              }
\newcommand   {\smax}        {\mbox{\rm{Smax}}                                 }
\newcommand   {\trdeg}       {\mbox{\rm{trdeg}}                                }
\newtheorem{theorem}{Theorem}[section]
\newtheorem{proposition}[theorem]{Proposition}
\newtheorem{corollary}[theorem]{Corollary}
\theoremstyle{definition}
\newtheorem{definition}[theorem]{Definition}
\newtheorem{remark}[theorem]{Remark}
\title[Infinite Versions of Hilbert's Nullstellensatz]{Infinite Versions of Hilbert's Nullstellensatz}
\author[A. Bernhard Zeidler]{A.~Bernhard Zeidler}
\address{Mathematisches Institut, Universit\"at T\"ubingen,
Auf der Morgenstelle 10, 72076 T\"ubingen, Germany}
\email{zeidler@math.uni-tuebingen.de}
\subjclass[2020]{13A15,14A04}
\begin{document}

\begin{abstract}
 We compile a long list of equivalent formulations of Hilbert's Nullstellensatz in
 infinite dimensions, and prove a persistence result for the strong Nullstellensatz
 in large polynomial rings.
\end{abstract}

\maketitle

\section{Presentation of the Results}
\label{sec:maintheorem}

 This note is devoted to Hilbert's Nullstellensatz for polynomial rings of infinite
 Krull dimension. We collect classical and recent equivalences (see Theorem~\ref{thm:nullstellensatz})
 and provide complete, elementary proofs (see Section~\ref{sec:alltheproofs}) of which
 some are new or generalize existing ones. Moreover, given an ideal in a polynomial
 ring of dimension lower than the cardinality of the base field, we show that its
 extensions to larger polynomial rings satisfy the strong Nullstellensatz, even if
 their cardinality exceeds that of the base field (see Corollary~\ref{cor:idealfrombelow}).

 Let us fix the setting and notation: All rings and algebras are assumed to be commutative,
 non-zero, with unit and their homomorphisms are assumed to preserve the unit element.
 Given a ring $F$ and a (not necessarily finite) set $I$, let $A := F[t_i \mid i \in I]$
 be the polynomial algebra over $F$. The set of maximal ideals of~$A$ is denoted by
 $\smax(A)$. Any ideal $\scripta \ideal A$ defines the zero set $\setV(\scripta) \setin
 F^I$ and any $X \setin F^I$ defines the vanishing ideal $\setI(X) \ideal A$. The
 vanishing ideal of a single element $x \in F^I$ is denoted by $\scriptm_x$. 

 \begin{theorem} \label{thm:nullstellensatz}
  Let $F$ be a non-zero ring, $I$ be a non-empty index set and the polynomial algebra
	$A = F[t_i \mid i \in I]$. Then the following statements are equivalent:
	\begin{enumerate}
	 \item[(a)]
	  $F$ is an algebraically closed field and $\abs{I} < \abs{F}$ holds for the
		cardinalities.
	 \item[(b)]
	  Any proper ideal $\scripta \ideal A$ admits an $F$-algebra homomorphism $A/\scripta \to F$.
	 \item[(c)]
    Any maximal ideal $\scriptm \ideal A$ admits an $F$-algebra homomorphism $A/\scriptm \to F$.
	 \item[(d)]
	  Any maximal ideal $\scriptm \ideal A$ is of the form $\scriptm = \scriptm_x$ for
		some $x \in F^I$.
	 \item[(e)]
	  The assignment $x \mapsto \scriptm_x$ defines a bijection $F^I \to \smax(A)$.
	 \item[(f)]
    $F$ is a field and any $\scriptm \in \smax(A)$ and $i_1,\dots,i_n \in I$ admit
		$x_{i_k} \in F$ with
		\[ \scriptm \cap F[t_{i_1}, \ldots,  t_{i_n}] \ = \
		   \gen{t_{i_1} - x_{i_1}, \, \dots, \, t_{i_n} - x_{i_n}}. \]
	 \item[(g)]
	  $F$ is a field and for any ideal $\scripta \ideal A$ we get $\setV(\scripta) = \emptyset
		\tiff \scripta = A$.
	 \item[(h)]
	  $F$ is a field and any field extension $F \setin K = F[u_i \mid i \in I]$ is trivial.
	 \item[(i)]
	  For any ideal $\scripta \ideal A$ we have $\sqrt{\scripta} = \setI \setV(\scripta)$.
	\end{enumerate}
 \end{theorem}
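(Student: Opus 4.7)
I will prove the cycle $(a) \Rightarrow (b) \Rightarrow (c) \Rightarrow (d) \Rightarrow (e) \Rightarrow (f) \Rightarrow (g) \Rightarrow (h) \Rightarrow (a)$ and then attach (i) through $(i) \Rightarrow (g)$ and $(a) \Rightarrow (i)$ (Rabinowitsch). Almost all of the work is concentrated in the single implication $(a) \Rightarrow (b)$; the remaining cyclic steps are bookkeeping and (i) comes along for the ride.

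\textbf{The core step $(a) \Rightarrow (b)$.} Given a proper ideal $\scripta \ideal A$, I extend to a maximal $\scriptm \ideal A$ by Zorn and set $K := A/\scriptm$, a field containing $F$. Since $F$ is algebraically closed, it suffices to show that $K/F$ is algebraic, for then $K = F$ and the composition $A \to A/\scripta \to K = F$ is the desired $F$-algebra homomorphism. If $\abs{I}$ is finite, this is Zariski's Lemma applied to the finitely generated $F$-algebra $K$. If $\abs{I}$ is infinite, I count dimensions: the monomials in $\set{t_i \mid i \in I}$ span $A$ as $F$-vector space and number $\abs{I}$, so $\dim_F K \leq \abs{I} < \abs{F}$; but if some $u \in K$ were transcendental over $F$, the family $\set{(u - a)^{-1} \mid a \in F}$ would be $F$-linearly independent in $K$, forcing $\dim_F K \geq \abs{F}$, a contradiction.

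\textbf{The other cyclic steps.} (b)$\Rightarrow$(c) is immediate. For (c)$\Rightarrow$(d), $x_i := \phi(t_i + \scriptm)$ gives $\scriptm \setin \scriptm_x \neq A$, so maximality yields equality. For (d)$\Rightarrow$(e), applying (d) to any maximal ideal forces $F \cong A/\scriptm_x$ to be a field, and injectivity of $x \mapsto \scriptm_x$ follows from $\scriptm_x \cap F = 0$. (e)$\Rightarrow$(f) is the classical finite-variable Nullstellensatz inside $F[t_{i_1},\dots,t_{i_n}]$. For (f)$\Rightarrow$(g), extend a proper $\scripta$ to a maximal $\scriptm$, use (f) with $n = 1$ to pick $x_i \in F$ with $t_i - x_i \in \scriptm$ (unique since $\scriptm \cap F = 0$), and observe that any $f \in \scriptm$ involves finitely many variables and hence vanishes at $x$ by (f). For (g)$\Rightarrow$(h), given $K = F[u_i \mid i \in I]$ the surjection $t_i \mapsto u_i$ has maximal kernel, for which (g) produces $x \in F^I$ with $\kn = \scriptm_x$, whence $K \cong F$ as $F$-algebra. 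For (h)$\Rightarrow$(a), if $F$ were not algebraically closed, an algebraic $u \notin F$ padded by $u_i := 0$ would realize $F[u] \supsetneq F$ in the form forbidden by (h); and if $\abs{I} \geq \abs{F}$, the rational function field $F(s) = F[s,\, (s - a)^{-1} \mid a \in F]$, indexable by $I$ via repetitious labeling, would do the same (noting that a finite algebraically closed field does not exist, so $F$ is necessarily infinite).

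\textbf{Attaching (i) and main obstacle.} For $(i) \Rightarrow (g)$: if $\setV(\scripta) = \emptyset$ then $\setI\setV(\scripta) = A$, so $\sqrt{\scripta} = A$ and hence $\scripta = A$; conversely $\setV(A) = \emptyset$ is clear; and $F$ is a field because any proper ideal of $F$ would extend to a proper $\scripta \ideal A$ with empty zero set, contradicting what has just been shown. For $(a) \Rightarrow (i)$ I invoke Rabinowitsch in one extra variable: $(a)$ for $(F, I)$ also holds for $(F, I \cup \set{s_0})$ because $\abs{F}$ is infinite and $\abs{I \cup \set{s_0}} \leq \abs{I} + 1 < \abs{F}$, so by the cycle $(g)$ holds in $A[s_0]$; for $f \in \setI\setV(\scripta)$ the ideal $\scripta A[s_0] + (s_0 f - 1)$ has empty zero set in $F^{I \cup \set{s_0}}$ and is therefore the unit ideal, and clearing denominators in a witness $1 = \sum_\alpha g_\alpha h_\alpha + (s_0 f - 1) h$ at $s_0 := 1/f$ yields $f^N \in \scripta$. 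The single technical obstacle is the dimension count in the infinite-$I$ case of $(a) \Rightarrow (b)$; everything else is classical or formal.
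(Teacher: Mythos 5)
Your proof is correct and takes a genuinely different route from the paper in the one hard step. The paper's organization is a web of equivalences centered on (d): claim 1 shows (b) forces $F$ to be an algebraically closed field; claim 2 links (b)--(e); claim 3 proves $(d)\Rightarrow(a)$ via a surjection $A\onto F(s)$ built from an enumeration $H\onetoone F$; claim 4 links (d) and (f); and the heart, claim 5, proves $(a)\Rightarrow(d)$ for infinite $I$ by a combinatorial argument (following Brodmann's treatment of $F=\setC$): one totally orders the monomials, introduces the subspaces $\greekO_\greekb$ of dimension at most $\abs{I}$, shows by a pigeonhole/cardinality argument that the Bezout cofactors $g^z$ of $t_j-z$ must satisfy a nontrivial $F$-linear relation, and from that relation manufactures a nonzero univariate $g\in\scriptm\cap F[t_j]$ whose linear factors violate the hypothesis. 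You instead prove $(a)\Rightarrow(b)$ directly by passing to the residue field $K=A/\scriptm$ and counting dimensions there: for infinite $I$, $\dim_F K\leq\abs{\setN^{\oplus I}}=\abs{I}<\abs{F}$, yet any transcendental $u\in K$ makes $\set{(u-a)^{-1}\mid a\in F}$ an $F$-independent family of size $\abs{F}$, so $K/F$ is algebraic and hence $K=F$. This is the classical "large base field" argument (the one used, e.g., for uncountable $F$ and finitely many variables) generalized to arbitrary cardinals, and it is shorter and more transparent than the paper's claim 5; the paper's argument, on the other hand, stays entirely inside $A$ and produces an explicit witness $g\in\scriptm$, which has its own charm. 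Both routes appeal to Zariski's Lemma for the finite-$I$ case, since the dimension count degenerates when $\abs{F}=\aleph_0$ and $I$ is finite. Your cyclic arrangement $(a)\Rightarrow\cdots\Rightarrow(h)\Rightarrow(a)$ with $(i)$ attached via Rabinowitsch is clean and all the bookkeeping steps check out; one tiny quibble is that $(e)\Rightarrow(f)$ does not actually require the finite-variable Nullstellensatz as you claim---once $\scriptm=\scriptm_x$, the intersection $\scriptm\cap F[t_{i_1},\dots,t_{i_n}]$ is a proper ideal containing the maximal ideal $\gen{t_{i_1}-x_{i_1},\dots,t_{i_n}-x_{i_n}}$, and that alone forces equality.
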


 The presumably most popular equivalence, (a) $\tiff$ (d), is attributed to Kaplansky,
 apparently without concrete reference (compare~\cite[p.~407, footnote 1]{Lang}). We
 provide a direct proof here. Statement (b) is the chromatic version \cite[Thm.~6.1]{BSY}
 by Burklund, Schlank and Yuan. We present a slight modification of their proof, to match
 our formulation. Statement (h) is taken from \cite{Lang} and (i) is gained from (d)
 classically via \emph{Rabinowitsch's trick}. We neither saw (c) nor (f) before, but
 they're elementary. The rest is common folklore.
 
 \begin{remark}
  It is clear, that the strong Nullstellensatz \ref{thm:nullstellensatz}.(i) gives
	rise to a one-to-one correspondence between radical ideals of $A = F[t_i \mid i
	\in I]$	and algebraic subsets of $F^I$. All we need is $F$ being an algebraically
	closed field and $\abs{I} < \abs{F}$.
 \end{remark}

 \begin{remark} \label{rem:Oempty}
  Note that we \emph{assumed} $F$ to be a field in (a), (f), (g) and (h) only. In
	the other statements, this is implied. In fact it is possible to drop this assumption
	in (f) as well, by allowing $n = 0$. In this case we have $\scriptm \cap F = 0$ for
	any $\scriptm \in \smax(A)$ and this is equivalent to $F$ being a field.
 \end{remark}

 \begin{remark} \label{rem:lang}
  In 1952 S.~Lang proved a similar equivalence: If $F$ is an algebraically closed
	field and $P \setin F$ is the prime field of $F$, then property (d) holds true
	if and only if $I$ is finite or $\abs{I} < \trdeg(F:P)$. The latter is much the
	same as (a), since the transcendence degree satisfies
	\[ \abs{F} \ = \ \Max{\abs{\setN}, \, \trdeg(F:P)} \]
 \end{remark}

 \begin{corollary} \label{cor:idealfrombelow}
  Let $F$ be an algebraically closed field and $I \setin J$ be index sets, such that
	$I$ is non-empty and $\abs{I} < \abs{F}$. If now $\scriptu \ideal F[t_j \mid j \in J]$
	is an ideal of the form $\scriptu = \scripta \, F[t_j \mid j \in J]$ for some ideal
	$\scripta \ideal F[t_i \mid i \in I]$, then we still get the identity
  \[ \setI \, \setV (\scriptu) \ = \ \sqrt{\scriptu} \]
 \end{corollary}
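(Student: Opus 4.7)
My plan is to reduce the problem to a polynomial ring small enough for Theorem~\ref{thm:nullstellensatz} to apply directly. The inclusion $\sqrt{\scriptu} \setin \setI\setV(\scriptu)$ is immediate, since any $f$ with $f^n \in \scriptu$ vanishes on $\setV(\scriptu)$. The content lies in the reverse inclusion, and I would exploit that any candidate $f \in \setI\setV(\scriptu)$ involves only finitely many indeterminates, so I can descend to a polynomial ring whose index set still has cardinality strictly below $|F|$.

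Given $f \in \setI\setV(\scriptu)$, I would let $S \setin J$ denote the finite set of indices appearing in $f$, and set $J' := I \cup S$, $B' := F[t_j \mid j \in J']$ and $\scriptv := \scripta \, B'$. Since $F$ is algebraically closed it must be infinite, hence $|J'| \le |I| + |S| < |F|$ (using $|I| < |F|$ and $|S|$ finite). Writing $\setV'$ and $\setI'$ for the zero-set and vanishing-ideal operators attached to $B'$ and $F^{J'}$, Theorem~\ref{thm:nullstellensatz}.(a)$\timplies$(i) applies to $B'$ and yields $\setI' \setV'(\scriptv) = \sqrt{\scriptv}$.

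The core step will be to verify that $f \in \setI'\setV'(\scriptv)$, for then $f \in \sqrt{\scriptv} \setin \sqrt{\scriptu}$ and the proof is complete. To do so, I would take $x \in \setV'(\scriptv) \setin F^{J'}$ and extend it to some $\tilde x \in F^J$ by arbitrary values on $J \setminus J'$. Since every $a \in \scripta$ depends only on $t_i$ with $i \in I \setin J'$, and $\scripta \setin \scriptv$, I obtain $a(\tilde x) = a(x) = 0$. As $\scriptu$ is generated by $\scripta$ over $F[t_j \mid j \in J]$, each $g \in \scriptu$ is a finite $F[t_j \mid j \in J]$-combination of such $a$'s, hence $g(\tilde x) = 0$; that is, $\tilde x \in \setV(\scriptu)$. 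From $f \in \setI\setV(\scriptu)$ I then get $f(\tilde x) = 0$, and since $f$ uses only variables indexed by $J'$, this forces $f(x) = 0$, as needed.

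The only real subtlety will be the cardinality bookkeeping in the second step; it relies on the observation that algebraically closed fields are infinite, so that enlarging $I$ by the finitely many indices of $f$ cannot push the cardinality up to $|F|$. Everything else is a straightforward descent-to-finite-support argument layered on top of Theorem~\ref{thm:nullstellensatz}.
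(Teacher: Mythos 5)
Your proof is correct, but it takes a genuinely different route from the paper's. The paper works \emph{globally}: it writes $\setV(\scriptu)=\big(\greekp_I^J\big)^{-1}\!\big(\setV(\scripta)\big)$ for the projection $\greekp_I^J\colon F^J\to F^I$, proves as a separate lemma (the paper's ``Claim~6'') that $\setI\big((\greekp_I^J)^{-1}(X)\big)=\setI(X)\,F[t_j\mid j\in J]$ for every $X\subseteq F^I$, and then applies the strong Nullstellensatz \emph{once}, in the ring $F[t_i\mid i\in I]$. You instead argue \emph{polynomial-by-polynomial}: given $f\in\setI\setV(\scriptu)$, you enlarge $I$ to $J'=I\cup\mathrm{supp}(f)$ (still of cardinality $<\abs{F}$ because algebraically closed fields are infinite), apply the Nullstellensatz in $F[t_j\mid j\in J']$, and lift back to $F[t_j\mid j\in J]$. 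Your point-extension step (extending $x\in\setV'(\scriptv)$ arbitrarily to $\tilde x\in F^J$ and checking $\tilde x\in\setV(\scriptu)$) plays the role that Claim~6 plays in the paper, but as a pointwise argument rather than an identity of ideals. The paper's approach is slightly more structural and isolates a reusable fact about vanishing ideals of cylinder sets; yours is more elementary, needs no separate lemma, and confines the cardinality bookkeeping to a single line. Both are complete and correct.
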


\section{Proof of Theorem~\ref{thm:nullstellensatz} and Corollary~\ref{cor:idealfrombelow}}
\label{sec:alltheproofs}

 It is well-known, that: If $F$ is a field and $I$ is a non-empty index set, then
 the ideal $\scriptm_x	= \setV(x) = \gen{t_i - x_i \mid i \in I}$ is a maximal ideal
 of $A = F[t_i \mid i \in I]$ and the following map is well-defined and injective
 \[ F^I \, \into \, \smax(A) \; : \; x \, \mapsto \, \scriptm_x \]
 \begin{definition}
	We say that $A$ is {\bf geometric}, iff this map is surjective. That is iff every
	maximal ideal $\scriptm \in \smax(A)$ of $A$ is of the form $\scriptm = \scriptm_x$
	for some $x \in F^I$.
 \end{definition}

 \begin{proposition} \label{pro:geometric}
  Let $F$ be a field, $I$ a non-empty index set and $A = F[t_i \mid	i \in I]$ the
	$F$-algebra of polynomials. Then we get the following statements
  \begin{enumerate}
	 \item[(i)]
	  If $I$ is finite and $F$ is an algebraically closed field, then $A$ is geometric.
	 \item[(ii)]
	  If $I$ is infinite, then $A$ is geometric if and only if $A[s]$ is geometric.
  \end{enumerate}
 \end{proposition}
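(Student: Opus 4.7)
The plan is to handle the two parts entirely separately, each by a short and well-known device.

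For part (i), I would invoke Zariski's lemma (a finitely generated $F$-algebra that happens to be a field is necessarily a finite algebraic extension of $F$). Given $\scriptm \in \smax(A)$, the quotient $K := A/\scriptm$ is a field, and since $I$ is finite it is generated as an $F$-algebra by the finitely many residues $\bar t_i$. Zariski's lemma then forces $K$ to be algebraic over $F$, and since $F$ is algebraically closed we conclude $K = F$. Setting $x_i \in F$ to be the image of $t_i$, we obtain $t_i - x_i \in \scriptm$ for every $i$, hence $\scriptm_x \setin \scriptm$; maximality of $\scriptm_x$ yields equality. The main ingredient here is Zariski's lemma itself, which is the real content of the classical Nullstellensatz.

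For part (ii), the idea is purely combinatorial: since $I$ is infinite we have $|I \sqcup \{*\}| = |I|$, so fix a bijection $\phi : I \sqcup \{*\} \to I$. Define an $F$-algebra isomorphism
\[ \Phi \ : \ A[s] \ \isoto \ A, \qquad \Phi(t_i) := t_{\phi(i)}, \quad \Phi(s) := t_{\phi(*)}. \]
This $\Phi$ restricts to a bijection $\smax(A) \to \smax(A[s])$ of maximal-ideal spaces. The key observation is that $\Phi$ respects the class of ideals of the form $\scriptm_x$: for $y = (x, z) \in F^I \times F = F^{I \sqcup \{*\}}$, $\Phi$ sends the ideal $\gen{t_i - x_i, s - z \mid i \in I}$ of $A[s]$ to the ideal $\gen{t_{\phi(i)} - x_i, t_{\phi(*)} - z \mid i \in I}$ of $A$, which is exactly $\scriptm_{y \circ \phi^{-1}}$. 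Thus under $\Phi$ the geometric maximal ideals on one side correspond bijectively with the geometric ones on the other, and the equivalence follows.

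I do not anticipate any serious obstacles. The only subtlety worth flagging is in part (ii), where one must be careful that geometricity is phrased relative to a chosen set of polynomial generators; this is precisely why reindexing via the bijection $\phi$ gives the conclusion without any algebraic work. All other arithmetic is trivial.
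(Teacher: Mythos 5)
Your proposal is correct and matches the approach the paper itself gestures at: for (i) the paper cites Zariski's Noether-normalization proof, which is exactly what Zariski's lemma packages, and for (ii) the paper calls the statement ``quite obvious'' (following Lang), which is precisely your reindexing isomorphism $\Phi\colon A[s]\isoto A$ built from a bijection $I\sqcup\{*\}\to I$. The only nit is a direction typo --- with $\Phi\colon A[s]\to A$ the induced map on maximal ideals is $\smax(A[s])\to\smax(A)$, $\scriptm\mapsto\Phi(\scriptm)$, not $\smax(A)\to\smax(A[s])$ --- but this does not affect the argument.
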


 In this proposition (i) is the classic (weak) Nullstellensatz, of which numerous
 proofs exist: A short, elegant proof has been given by Zariski in \cite{Zari} using
 the Noether Normalisation theorem. Krull and Goldman introduced the theory of Jacobson
 resp.~Hilbert rings, which can be seen as a generalization of this, e.g.~refer to
 \cite{Clark}. In 1999 Munshi reduced the techniques of Jacobson rings to a minimum
 - refer to \cite{May}, in 2006 Arrondo gave an astoundingly simple proof in \cite{Arro}.
 And in 2018 a proof using Gröbner bases has been provided by Glebsky and Rubio-Barrios 
 \cite{Gleb}. Statement (ii) is quite obvious, we took it from \cite{Lang}. 

 Let us split the proof of Theorem~\ref{thm:nullstellensatz} into parts. The first
 insight is due to \cite{BSY}: {\bf Claim 1:} Under assumption (b), $F$ is an algebraically
 closed field. \emph{Proof of claim 1:} Let us first prove, that $F$ is a field:
 Consider any $a \in F$ with $a \nonin F^{\ast}$ and define the ideal $\scripta :=
 \gen{t_i \mid i \in I} + a A$. Then the homomorphism
 \[ A \, \onto \, \Modulo{F}{a F} \; : \; f \, \mapsto \, f(0) + a F \]
 is surjective and has kernel $\scripta$, which is a proper ideal, since $a \nonin
 F^{\ast}$. Therefore $A/\scripta$ and $F/a F$ are isomorphic as $F$-algebras and
 by (b) there is a homomorphism $\greekph$ of the form $\greekph : A/\scripta \to
 F$. This gives rise to a homomorphism of $F$-algebras
 \[ \Modulo{F}{a F} \, \isoto \, \Modulo{A}{\scripta} \, \to \, F \]
 Explicitly this is $F/a F \to F : b + a F \mapsto b 1_A + \scripta \mapsto \greekph(
 b 1_A + \scripta) = b$. But from $a + a R \mapsto a$ and $a + a R \mapsto 0$ we find
 $a = 0$ and therefore $F$ is a field.

 Next we have to show, that $F$ even is algebraically closed: Fix any $j \in I$, let
 $H := I \setminus \set{j}$ and consider an arbitrary, non-constant polynomial $f \in
 F[t_j] \setin A$. Now define the ideals $\scripth := \gen{t_i \mid i \in H}$ and
 $\scripta := f A + \scripth$. Then it is clear, that $F[t_j] \isoto A/\scripth : g
 \mapsto g + \scripth$ is an isomorphism of $R$-algebras, that maps $f F[t_j]$ to
 $\scripta/\scripth$. And therefore we find the isomorphism
 \[ \Modulo{F[t_j]}{f F[t_j]} \; \isoto \; \Modulo{A/\scripth}{\scripta/\scripth}
	  \; \isoto \; \Modulo{A}{\scripta} \; : \; g + f F[t_j] \, \mapsto \, g + \scripta \]
 Now $F[t_j]/f F[t_j]$ is a $\deg(f)$-dimensional $F$-vector space and as $f$ is
 non-constant, we have $\deg(f) \geq 1$. In particular $F[t_j] / f F[t_j] \noneq
 0$ and therefore $A / \scripta \noneq 0$. That is $\scripta$ is a proper ideal
 and by (b) there is a homomorphism $\greekph$ of $F$-algebras
 \[ \greekph \; : \;  \Modulo{F[t_j]}{f F[t_j]} \; \isoto \;
	  \Modulo{A}{\scripta} \; \onto \; F \]
 Let $\scriptf := f F[t_j]$ and $x := \greekph \big( t_j + \scriptf \big) \in F$.
 As $\greekph$ is a homomorphism of $R$-algebras, it commutes with polynomials and
 hence $f(x) = \greekph \big( f(t_j + \scriptf) \big) = \greekph \big(f + \scriptf
 \big) = 0_F$. Thus any non-constant polynomial over $F$ has a root.
 \qed \\

 {\bf Claim 2:} Statements (b) to (e) are equivalent. \emph{Proof of claim 2.} (b)
 $\timplies$ (c) is trivial, for the converse implication choose a maximal ideal
 $\scriptm$ containing $\scripta$. Then we can compose $A/\scripta \onto	A/\scriptm
 \to F$. In (d) $\timplies$ (c) we are given $\scriptm = \scriptm_x$ for some $x \in
 F^I$. Then $\greekph : f \mapsto f(x)$ is the homomorphism we seek. Conversely (c)
 implies, that $F$ is a field, via (b) and claim 1. Given a maximal ideal $\scriptm$
 fix $\greekph : A/\scriptm \to F$ and $x := (x_i)$ where $x_i := \greekph(t_i + \scriptm)
 \in F$. Then $\scriptm_x \setin \kn(\greekph) = \scriptm$. Over a field $\scriptm_x$
 is maximal, thus $\scriptm_x = \scriptm$, which is (d). Next (e) $\timplies$ (d)
 is trivial and conversely we already know (d) implies (b) such that $F$ is a field
 again. Hence $x \mapsto \scriptm_x$ is injective, now (d) is the surjectivity missing
 for (e). 
 \qed \\

 {\bf Claim 3:} (d) $\timplies$ (a) \emph{ Proof of claim 3.} By claim 1 it only remains
 to prove $\abs{I} < \abs{F}$. If $I$ is finite, then $\abs{I} < \abs{F}$ is clear, as
 an algebraically closed field always is infinite. So from now on we assume, that $I$
 is infinite and elaborate on an argument given in \cite{Lang}: Fix any $j \in I$,
 as $I$ is infinite we have $\abs{I \setminus \{j\}} = \abs{I}$ and thereby there
 is a subset $H \setin I \setminus \{j\}$ such that $H \onetoone F : i \mapsto a_i$.
 We now define a homomorphism of $F$-algebras $\greekph : A \to F(s)$, by expansion of
 \begin{eqnarray*}
	\greekph(t_i) \, := \, s                 & \mbox{for} &	i \, =   \, j                \\
	\greekph(t_i) \, := \, \frac{1}{s - a_i} & \mbox{for} &	i \, \in \, H                \\
	\greekph(t_i) \, := \, 0                 & \mbox{for} &
	i \, \in \, I \setminus \Big( H \cup \{j\} \Big)
 \end{eqnarray*}
 We first have to check, that $\greekph$ is surjective: As $F$ is algebraically
 closed any polynomial $p(s) \in F[s]$ splits into linear factors, but as we have
 enumerated $F$ using the index set $H$, we may write it in the form
 \[ p(s) \ = \ \greekl \, (s - a)^{\greeka} \ := \
    \greekl \, \prod_{i \in H} (s - a_i)^{\greeka_i} \]
 with only finitely many $\greeka_i \in \setN$ being non-zero. Therefore any rational
 function $p/q$, where $p$, $q \in F[s]$ and $q \noneq 0$, can be written in the
 following form, where $k_i \in \setZ$ and only finitely many $k_i$ are non-zero
 \[ \frac{p(s)}{q(s)} \ = \ \greekl \, \prod_{i \in H} (s - a_i)^{k_i} \]
 By construction of $\greekph$ it is clear, that any polynomial $f(t_j) \in F[t_j]
 \setin A$ is simply mapped to $f(s) \in F(s)$. Likewise any monomial $t^{\greekb}
 \in F[t_i \mid i \in H] \setin A$ where $\greekb \in \setN^{\oplus H}$ is mapped
 to $1/(s-a)^{\greekb}$. Given $p/q \in F[s]$, write $p/q$ in the form above and
 pick up $\greeka$ and $\greekb \in \setN^{\oplus H}$ such that $k_i = \greeka_i
 - \greekb_i$ for any $i \in H$ (and $\greeka_i = 0 = \greekb_i$ for $k_i = 0$).
 Then we find polynomials $f := \greekl \prod_i (t_j - a_i)^{\greeka_i} \in F[t_j]
 \setin A$ and $g := t^{\greekb} \in F[t_i \mid i \in H] \setin A$ such that $fg
 \in A$ is mapped to $p/q \in F(s)$ under $\greekph$
 \[ \greekph \left( \greekl \prod_{i \in H} (t_j - a_i)^{\greeka_i} \cdot
    t^{\greekb} \right) \; = \; \greekl \prod_{i \in H} (s - a_i)^{\greeka_i}
 	 \cdot \frac{1}{(s-a)^{\greekb}} \; = \; \greekl \prod_{i \in H} \frac{(s
	 - a_i)^{\greeka_i}}{(s - a_i)^{\greekb_i}} \; = \; \frac{p(s)}{q(s)} \]
 As $p/q$ have been arbitrary $\greekph$ is surjective and if we let $\scriptm :=
 \kn(\greekph)$ we therefore find an isomorphism $A/\scriptm \isoto F(s)$. As $F(s)$
 is a field, $\scriptm$ is a maximal ideal. Now by (d) we find $\scriptm = \scriptm_x$
 for some $x \in F^I$, in particular $t_i - x_i \in \scriptm$ for some $i \in H$.
 By construction this means
 \[ 0 \; = \; \greekph(t_i - x_i) \; = \; \frac{1}{s - a_i} - x_i
      \; = \; \frac{1 - x_i s + x_i a_i}{s - a_i} \]
 This yields $1 - x_i s + x_i a_i = 0$ and by comparing the coefficients of this
 polynomial in $s$, we get $x_i = 0$ and $0 = 1 + x_i a_i = 1$. But $0 = 1$ is an
 obvious contradiction to the assumption $\abs{F} \leq \abs{I}$.
 \qed \\

 {\bf Claim 4:} (d) and (f) are equivalent. \emph{Proof of claim 4.} Starting with
 (d) we also have (b) and hence $F$ is a field. Now, if $\{i_1,\dots,i_n\} \setin I$
 and $\scriptm \ideal A$ is a maximal ideal, then $\scriptm = \scriptm_x$ for some
 $x \in F^I$, by (d). In particular we have $t_{i_k} - x_{i_k} \in \scriptm$ and
 $t_{i_k} - x_{i_k} \in A_0 := F[t_{i_1},\dots,t_{i_n}]$ is clear, for any $1 \leq
 k \leq n$. Thereby
 \[ \gen{t_{i_1} - x_{i_1},\dots,t_{i_n} - x_{i_n}} \ \setin \ \scriptm \cap A_0 \]
 As $F$ is a field $\gen{t_{i_1} - x_{i_1},\dots,t_{i_n} - x_{i_n}}$ is a maximal ideal
 of $A_0$. Yet $\scriptm \cap A_0$ is a proper (even prime) ideal and therefore this
 inclusion turns into an identity.
		
 Conversely for (f) $\timplies$ (d) we start with a maximal ideal $\scriptm$ and have
 to find some $x \in F^I$, such that $\scriptm = \scriptm_x$. Fix an arbitrary $i \in
 I$, by (f) we have $\scriptm \cap F[t_i] = \gen{t_i - x_i}$ for some $x_i \in F$. As
 we can do this with any $i \in I$, we find a point $x = (x_i) \in F^I$ with $t_i -
 x_i \in \scriptm$ for any $i \in I$. That is $\scriptm_x \setin \scriptm$. As $F$ is
 a field $\scriptm_x$ is a maximal ideal such that $\scriptm_x \setin \scriptm$ yields
 $\scriptm_x = \scriptm$. 

 To prove remark \ref{rem:Oempty} Let us also consider $n = 0$: In this case $F[\emptyset
 ] = F$ and $\gen{\emptyset} = 0$. Thus (f) reads as $\scriptm \cap F = 0$ for any
 $\scriptm \in \smax(A)$. If $F$ is a field, then $\scriptm \cap F = 0$ is clear, as
 $0$ is the only proper ideal of $F$. Conversely assume $\scriptm \cap F = 0$ for any
 $\scriptm \in \smax(A)$, we need to show that $F$ is a field: Consider any $a \in F
 \setminus F^{\ast}$. Then $a F \ideal F$ is a proper ideal and hence there is a maximal
 ideal $\scriptm_0 \in \smax(F)$ containing $a F \setin \scriptm_0$. Then $\scriptm :=
 \set{f \in A \mid f(0) \in \scriptm_0}$ is a maximal ideal of $A$. Yet $a \in \scriptm_0
 = \scriptm \cap F = 0$ implies $a = 0$ and as $a$ has been arbitrary we conclude $F
 \setminus \{0\} = F^{\ast}$, which means $F$ is a field.
 \qed \\

 Let us skip some: The equivalence of (d) and (g) is a classic, the proof is plain
 to see. The equivalence of (g) and (h) is taken from \cite{Lang}, we do not relay
 it here, except (h) $\timplies$ (b) is immediate: Let $K := A/\scriptm$ and $u_i
 := t_i + \scriptm$, then $F[U] = A/\scriptm$ and in particular $A/\scriptm \to F$.
 The direction (i) $\timplies$ (g) is immediate and the converse can now be done by
 \emph{Rabinowitsch's trick}, which is based on Proposition \ref{pro:geometric}.(ii).
 This brings (g) to (i) into the list of equivalences. \\

 \goodbreak

 It only remains to tackle {\bf Claim 5:} (a) $\timplies$ (d). \emph{Proof of Claim
 5.} If $I$ is finite, then $A = F[t_i \mid i \in I]$ is geometric, by \ref{pro:geometric}.(i)
 and this is (d). So we only have to consider infinite $I$, for which we generalize
 a proof found in \cite{Brod} for $F = \setC$: By definition any polynomial $f \in
 A = F[t_i \mid i \in I]$ is an $F$-linear combination over the monomials $t^{\greeka}$
 where $\greeka \in \setN^{\oplus I}$. However, as $I$ is infinite
 \[ \abs{\setN^{\oplus I}} \ = \ \Max{\abs{\setN}, \, \abs{I}} \ = \ \abs{I} \]
 We can choose any well-ordering on $I$ and use the bijection $I \onetoone \setN^{
 \oplus I} : i \mapsto \greeka_i$ to install a total order $\greeka_i \leq \greeka_j
 \defiff i \leq j$. We do not require this to be a monomial order, nevertheless we
 may define the degree of a polynomial $f \in A$, as usual
 \[ \deg(f) \ := \ \Max{\greeka \in \setN^{\oplus I} \mid
	  \mbox{$\greeka$-coefficient of $f$} \noneq 0} \]
 If now $\greekb \in \setN^{\oplus I}$ is any multi-index then $\set{t^{\greeka}
 \mid \greeka \leq \greekb}$ corresponds to a subset of $I$. Hence the subspace
 of $A$ generated by this set has dimension $\abs{I}$, at most:
 \[ \greekO_{\greekb} \ := \ \set{f \in A \mid \deg(f) \leq \greekb} 
    \ \leq \ A \]
 Now consider some maximal ideal $\scriptm \ideal A$. We need to show that $\scriptm$
 belongs to some point $x = (x_i) \in F^I$. By maximality of $\scriptm_x$, it suffices
 to show $\scriptm_x \setin \scriptm$, which is
 \[ \A i \in I \ \E x_i \in F \ : \ t_i - x_i \, \in \, \scriptm \]
 Assume this was false, i.e.~there was some $j \in I$ such that for any $z \in F$ we
 would get $t_j - z \nonin \scriptm$. Formally the assumption for this $j$ reads as
 \[ \E j \in I \ \A z \in F \ : \ t_j - z \, \nonin \, \scriptm \]
 As $\scriptm$ is maximal this implies $\scriptm + (t_j - z) A = A$ and hence (for
 any $z \in F$) we may find polynomials $m^z \in \scriptm$ and $g^z \in A$ such that
 \[ \A z \in F \ : \ m^z + (t_j - z) g^z \, = \, 1 \]
 Every polynomial $g^z$ has a degree and hence $F$ may be decomposed into a disjoint
 union of the following subsets
 \[ F \ = \ \bigsqcup_{i \in I} \set{z \in F \mid \deg(g^z) = \greeka_i} \]
 If all these sets $F_i := \set{z \in F \mid \deg(g^z) = \greeka_i}$ would satisfy
 $\abs{F_i} \leq \abs{I}$ then there would be surjective maps $s_i : I \onto F_i$
 and thereby a surjective map $I \times I \onto F$ by $(h,i) \mapsto s_i(h)$. Yet as
 $I$ is infinite we have $\abs{I \times I} = \abs{I} < \abs{F}$ by assumption and
 hence there cannot be a surjective map from $I \times I$ onto $F$. We conclude
 that there must be some $i \in I$ such that $\abs{F_i} > \abs{I}$. Let $\greekb
 := \greeka_i$ and $Z := F_i$ for this $i \in I$
 \[ Z \ = \ \set{z \in F \mid \deg(g^z) = \greekb} \ \Satisfies \ \abs{Z} \, > \, \abs{I} \]
 As we have seen $\greekO_{\greekb}$ is an $F$-vectorspace of dimension at most
 $\abs{I}$. If some of the $g^z$ where $z \in Z$ were identical, then we could
 skip the next step and continue with the next linear relation, right away. If
 all the $g^z$ where $z \in Z$ are distinct, then the set $\set{g^z \mid z \in Z}$
 has cardinality $\abs{Z} > \abs{I} \geq \dim \big( \greekO_{\greekb} \big)$.
 Thus the $g^z$ have to satisfy a linear relation, that is there are pairwise
 distinct elements $z_1,\dots, z_r \in Z$ and coefficients $\greekl_1,\dots,
 \greekl_r \in F^{\ast}$ such that
 \[ \sum_{p = 1}^r \greekl_p \, g^{z_p} \ = \ 0 \]
 We now define a specific polynomial $g \in F[t_j] \setin A$ in the one variable
 $t_j$ over $F$
 \[ g \ := \ \sum_{p = 1}^r \greekl_p \prod_{q \noneq p} (t_j - z_q)
	    \ \in \ F[t_j] \ \setin \ A \]
 By construction we have $g(z_1) = \greekl_1 (z_1 - z_2) \dots (z_1 - z_r) \noneq 0$
 and hence $g$ is non-zero itself. We now claim that this polynomial is nevertheless
 contained in the ideal $\scriptm$
 \begin{eqnarray*}
  g & = & \sum_{p = 1}^r \greekl_p \prod_{q \noneq p} (t_j - z_q)                      \\
    & = & \sum_{p = 1}^r \greekl_p [m^{z_p} + (t_j - z_p) g^{z_p}]
          \prod_{q \noneq p} (t_j - z_q)                                               \\
    & = & \sum_{p = 1}^r \left[ \greekl_p \prod_{q \noneq p} (t_j - z_q) \right]
	        m^{z_p} + \left[ \sum_{p = 1}^r \greekl_p \, g^{z_p} \right]
					\prod_{q = 1}^r (t_j - z_q)                                                  \\
    & = & \sum_{p = 1}^r \left[ \greekl_p \prod_{q \noneq p} (t_j - z_q) \right]
		      m^{z_p} + 0 \ \in \ \scriptm
 \end{eqnarray*}
 As $g \noneq 0$ and $g \in F^{\ast}$ is ruled out by $g \in \scriptm$, we see $g
 \nonin F$. But $\scriptm$ is maximal, in particular prime, and hence some prime
 factor of $g \in F[t_j]$ is contained in $\scriptm$. But as $F$ is algebraically
 closed, the prime factors of $g$ are of the form $t_j - z$ for some $z \in F$, in
 contradiction to the assumption that $t_j - z \nonin \scriptm$ for any $z \in F$.
 \qed \\

 \begin{proof}[Proof of Corollary~\ref{cor:idealfrombelow}]
 Let us abbreviate $F[I] := F[t_i \mid i \in I]$, respectively $F[J] := F[t_j \mid
 j \in J]$. Also denote the canonical projection $\greekp_I^J : F^J \onto F^I : x
 \mapsto x_I$ where for $x = (x_j)_{j \in J}$ we let $x_I := (x_i)_{i \in I}$. Then
 it is straightforward to see, that
 \[ \setV \Big( \scripta \, F[J] \Big) \ = \ 
	  \Big( \greekp_I^J \Big)^{-1} \Big( \setV \big( \scripta \big) \Big) \]
 We need to prepare another identity: {\bf Claim 6:} For any subset $X \setin F^I$
 we have
 \[ \setI \left( \Big( \greekp_I^J \Big)^{-1}(X) \right) \ = \ \setI(X) \, F[J] \]
 The cases $I = J$ and $X = \emptyset$ are trivial, so consider $I \setinn J$ and $X
 \noneq \emptyset$. In this case let $H := J \setminus I$ and split $x = (x_j) \in
 F^J$ into $x = (x_I,x_H) \in F^I \times F^H$ and likewise $(t_j)$ into $(t_I,t_H)$.
 That is we write $f(t) \in F[J]$ in the form $f(t_I)(t_H) \in F[I][H]$. Any $f \in
 \setI(X) \, F[J]$ is generated by some $a \in \setI(X)$ and as $a(x) = a(x_I) = 0$
 for any $x \in \widehat{X} := \big( \greekp_I^J \big)^{-1}(X) = X \times F^H$ this
 implies $f(x) = 0$, as well. If we conversely start with $f \in \setI(\widehat{X})$,
 then $0 = f(x) = f(x_I)(x_J)$ for any $x \in X \times F^H$. That is for any $x_I
 \in X$ we get $f(x_I)(t_H) \in \setI(F^H)$. And as $F$ is an infinite integral domain
 and $H \noneq \emptyset$ this means $f(x_I)(t_H) = 0 \in F[H]$. As $x_I \in X$ was
 arbitrary $f$ has to be an $F[H]$-linear combination of polynomials $a \in \setI(X)$
 and this means $f \in \setI(X) \, F[J]$. 
		
 Now that we have claim 6, we continue: The inclusion $\sqrt{\scriptu} \setin \setI \,
 \setV (\scriptu)$ is generally true, for any reduced ring. For the converse inclusion
 note that by the above we have $\setV(\scriptu) = \big( \greekp_I^J \big)^{-1}(\setV(
 \scripta))$ and thereby
 \begin{eqnarray*}
  \setI \, \setV (\scriptu)
  &  =  & \setI \left( \Big( \greekp_I^J \Big)^{-1} \left( \setV(\scripta) \right) \right)
  \; = \; \setI \, \setV(\scripta) \, F[J]                                             \\
  &  =  & \sqrt{\scripta} \, F[J]
	\; \setin \; \sqrt{\scripta \, F[J]}
	\; =  \; \sqrt{\scriptu}
 \end{eqnarray*}
 \end{proof}

\goodbreak

\begin{bibdiv} \begin{biblist}

 \bib{Arro}{article}{ 
  author={Arrondo, Enrique},
  title={Another elementary proof of the Nullstellensatz},
  journal={Amer. Math. Monthly},
  volume={113},
  date={2006},
  number={2},
  pages={169--171},
	issn={0002-9890},
	review={\MR{2203239}},
  note={\href{https://doi.org/10.1080/00029890.2006.11920292}{DOI 10.1080/00029890.2006.11920292}},}
	
 \bib{Brod}{book}{ 
  author={Brodmann, Markus},
  title={Algebraische Geometrie},
  publisher={Birkh\"auser Verlag},
	place={Basel},
  date={1989},}
	
 \bib{BSY}{article}{ 
  author={Burklund, Robert},
  author={Schlank, Tomer M.},
  author={Yuan, Alle},
  title={The Chromatic Nullstellensatz},
	journal={Annals of Mathematics},
	date={to appear},
	note={Available at \href{https://arxiv.org/abs/2207.09929}{https://arxiv.org/abs/2207.09929}},}
 
 \bib{Clark}{book}{ 
  author={Clark, Pete L},
  title={Commutative Algebra},
	publisher={(lecture notes)},
  note={Available at \url{http://alpha.math.uga.edu/~pete/integral2015.pdf}},
  date={2015},}
	
 \bib{Gleb}{article}{
  author={Glebsky, Lev},
  author={Rubio-Barrios, Carlos Jacob},
  title={A simple proof of Hilbert's Nullstellensatz based on Gröbner bases},
  journal={Lect. Mat.},
  volume={34},
  date={2013},
  number={1},
  pages={77--82},
  issn={0120-1980},
  review={\MR{3085773}},
	note={\href{https://dialnet.unirioja.es/descarga/articulo/7177312.pdf}{[spanish version]}},}

 \bib{Gold}{article}{ 
  author={Goldman, Oscar},
  title={Hilbert rings and the Hilbert Nullstellensatz},
  journal={Math. Z.},
  volume={54},
  date={1951},
  pages={136--140},
  issn={0025-5874},
  review={\MR{0044510}},
	note={\href{https://eudml.org/doc/169216}{DOI 10.1007/BF01179855}},}

 \bib{Hilb}{article}{ 
  author={Hilbert, David},
  title={Über die Theorie der algebraischen Formen},
  journal={Math. Ann.},
  volume={36},
  date={1890},
  number={4},
  pages={473--534},
  issn={0025-5831},	
  note={Available at \url{http://eudml.org/doc/157506}},}

 \bib{Kapl}{book}{ 
  author={Kaplansky, Irving},
  title={Commutative rings},
  edition={Revised edition},
  publisher={University of Chicago Press, Chicago, Ill.-London},
  date={1974},
  pages={ix+182},
  review={\MR{0345945}},}

 \bib{Krull}{article}{ 
  author={Krull, Wolfgang},
  title={Jacobsonsche Ringe, Hilbertscher Nullstellensatz, Dimensionstheorie},
  journal={Math. Z.},
  volume={54},
  date={1951},
  pages={354--387},
  issn={0025-5874},
  review={\MR{0047622}},
	note={\href{https://eudml.org/doc/169233}{DOI 10.1007/BF01238035}},}

 \bib{Lang}{article}{ 
  author={Lang, Serge},
  title={Hilbert's Nullstellensatz in infinite-dimensional space},
  journal={Proc. Amer. Math. Soc.},
  volume={3},
  date={1952},
  pages={407--410},
  issn={0002-9939},
  review={\MR{0047019}},
	note={\href{https://doi.org/10.2307/2031893}{DOI 10.2307/2031893}},}

 \bib{May}{article}{ 
  author={May, J. Peter},
  title={Munshi's proof of the Nullstellensatz},
  journal={Amer. Math. Monthly},
  volume={110},
  date={2003},
  number={2},
  pages={133--140},
  issn={0002-9890},
	review={\MR{1952440}},
	note={\href{https://doi.org/10.2307/3647772}{DOI 10.2307/3647772}},}

 \bib{Zari}{article}{ 
  author={Zariski, Oscar},
  title={A new proof of Hilbert's Nullstellensatz},
  journal={Bull. Amer. Math. Soc.},
  volume={53},
  date={1947},
  pages={362--368},
  issn={0002-9904},
	review={\MR{0020075}},
	note={\href{https://doi.org/10.1090/S0002-9904-1947-08801-7}{DOI 10.1090/S0002-9904-1947-08801-7}},}

\end{biblist} \end{bibdiv}

\end{document}